\newtheorem{theorem}{Theorem}[section]
\newtheorem{lemma}[theorem]{Lemma}
\newtheorem{corollary}[theorem]{Corollary}
\newtheorem{proposition}[theorem]{Proposition}
\newtheorem{definition}[theorem]{Definition}
\newenvironment{proof}{{\par\addvspace{0.1cm}\noindent \bf Proof. }}{\hfill$\Box$\par\medskip}
\newtheorem{remark}[theorem]{Remark}
\numberwithin{equation}{section}
\def\e{\varepsilon}
\def\RR{\mathbb{R}}
\def\CC{\mathbb{C}}
\def\ZZ{\mathbb Z}
\def\R{\Re\mathfrak{e} \,}%\def\R{\Re\mbox{{\rm e}} \,}
\def\ks{\mathcal{K}}
\def\Pf{\mbox{\rm Pf.}}
\def\M{M\"obius }
\def\Mi{M\"obius invariant }
\def\Mt{M\"obius transformation }
\begin{document}

\title{M\"obius invariant metrics on the space of knots}

\author{Jun O'Hara\footnote{Supported by JSPS KAKENHI Grant Number 16K05136.}}
%
%\date{}
%
\maketitle

\begin{abstract}
We give a condition for a function to produce a M\"obius invariant weighted inner product on the tangent space of the space of knots, and show that some kind of M\"obius invariant knot energies can produce M\"obius invariant and parametrization invariant weighted inner products. 
They would give a natural way to study the evolution of knots in the framework of M\"obius geometry. 
\end{abstract}

\medskip{\small {\it Keywords:} M\"obius energy, knot space, regularized potential}

{\small 2010 {\it Mathematics Subject Classification:} 57M25, 53A30, 57M25}% 

%%%%%%%%%%%%%%%%%%%%%%%%%%%%%%%%%%%%%%%%%%%%%%%%%%%%%%%%%%%%%%%%%%%%%%%%%%%%%%%%%%%%%
\section{Introduction}
The motivation of endowing a M\"obius invariant metric to the space of knots originated from the study of the energy $E$ of knots (\ref{defMobiusenergy}) which was given in \cite{O1}. The purpose of introducing the energy is to produce a representative configuration for each knot type as an energy minimizer in the knot type. It turns out to be the beginning of so-called geometric knot theory. 
Freedman, He and Wang gave the gradient of $E$ (\cite{FHW}). 
The regularity of $E$-critical knots has been studied by Zheng-Xu He \cite{He} and later by researchers of analysis including Simon Blatt, Philipp Reiter, Armin Schikorra, Aya Ishizeki, Takeyuki Nagasawa, Alexandra Gilsbach, Heiko von der Mosel, and Nicole Vorderobermeier \cite{B11}, \cite{R}, \cite{IN14}, \cite{IN15}, \cite{BRS}, \cite{Gv}, \cite{BV}, where the gradient is the first step of the study. 

One of the important properties of the energy $E$ is the \M invariance, which is the reason why $E+4$ (\ref{def_Mobius_energy_FHW}) is sometimes called ``{\em M\"obius energy}\,''. It was proved by Freedman, He and Wang \cite{FHW}, and using it they showed that there is an energy minimizer in each prime knot type. 

Although the energy $E$ is M\"obius invariant, the gradient is not. 
Freedman, He and Wang made a comment ``\,Oded Schramm observed that if $\gamma$ is not already a round circle, there exists a M\"obius-invariant inner product $\langle \,,\,\rangle$ on the tangent bundle of $\RR^3$ restricted to $\gamma\,$'' (\cite{FHW} page 41), although the description that follows (in particular, $\langle {\rm d} t, {\rm d} t\rangle_{\textrm{dual}}^{-1/2}$ in the formula (6.16)) is not clear. 

In this paper we give a necessary and sufficient condition for a weight function to produce a M\"obius invariant inner product on the tangent spaces of the space of knots which are not round circles. 
In particular, we will be concerned with a weight function that is independent of the parametrization of knots since we are interested in the shapes of knots i.e. the images of the embeddings. 
Our first example of a weight function is the cube of the {\em regularization of $r^{-2}$-potential}, which is the integrand of the energy $E$. 
The construction can be generalized to obtain weight functions from \Mi $2$-forms that are integrands of \Mi energies of knots. 

Our method would provide a natural way to study the evolution of knots with respect to a \M invariant functional in the quotient space of the shapes of knots modulo the \M group action.

%%%%%%%%%%%%%%%%%%%%%%%%%%%%%%%%%%%%%%%%%%%%%%%%%%%%%%%%%%%%%%%%%%%%%%%%%%%%%%%%%%%%%
\section{\M invariance condition for weight functions}
We use the following notation in this paper. 
Let $S^1$ be $\RR/\ZZ$, 
$f\colon S^1\hookrightarrow \RR^3$ a knot, $(\cdot \, , \cdot )$ the standard inner product, and 
$T$ a M\"obius transformation of $\RR^3\cup\{\infty\}$. 
Let $\ks$ be the space of knots and $T_f\ks$ the tangent space, 
\[
\begin{array}{rcl}
\ks&=&\displaystyle  \left\{f\colon S^1\hookrightarrow \RR^3\,:\,\mbox{$f$ is injective and }|f'(t)|\ne0 \>\>(\forall t)\right\},%f\colon S^1=\RR/\ZZ \hookrightarrow \RR^3\right\}, 
\\[2mm]
T_f\ks&=&\displaystyle \left\{u\colon S^1\hookrightarrow \RR^3\,:\, %\colon S^1=\RR/\ZZ \to \RR^3 \,:\, 
u(t)\perp f'(t) \>\> (\forall t\in S^1)\right\}.
\end{array}
\]
We will choose a suitable differentiability for $f$ in $\ks$ and $u$ in $T_f\ks$ according to the functional we study. For example, if we study the energy $E$ then we can use a Sobolev space $H^3(\RR/\ZZ, \RR^3)$ after \cite{B11} so that both $E$ and the gradient of $E$ are well-defined and a solution of the Euler-Lagrange equation exists. 

The $L^2$ metric on $T_f\ks$ is given by 
\begin{equation}\label{L2-inner_product}
(u,v)_f=\int_{S^1}(u(t),v(t))\,|f'(t)|\, dt \hspace{0.6cm}(u,v \in T_f\ks). 
\end{equation}

We say that a functional $e$ on $\ks$ has a gradient $G_e$ if for any $f\in\ks$ $G_e(f)\in T_f\ks$ satisfies 
\begin{equation}\label{f_L^2-gradient}
\left.\frac{d}{d\varepsilon}e(f+\varepsilon u)\right|_{\varepsilon=0}=(u, G_e(f))_f 
\end{equation}
for any $u\in T_f\ks$. 

We say that a functional $e$ on $\ks$ is \Mi if for any $f\in\ks$ and for any \Mt $T$ such that $T(f(S^1))$ is compact $e(T\circ f)=e(f)$ holds. 

\begin{proposition}\label{prop_round_circles_critical}
Suppose a functional $e$ on $\ks$ is M\"obius invariant and has a gradient $G_e$ in $T_f\ks$. Then round circles are critical with respect to $e$, namely $G_e$ vanishes at round circles.
\end{proposition}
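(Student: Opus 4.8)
The plan is to exploit a conformal covariance property of the gradient together with the existence of \M transformations that fix a round circle \emph{pointwise} while rotating its normal planes; this avoids any appeal to parametrization invariance. Let $f_0$ parametrize a round circle and write $C=f_0(S^1)$. For a \Mt $T$ with differential $dT_x=\mu_T(x)\,O_T(x)$, where $\mu_T(x)>0$ is the conformal factor and $O_T(x)\in O(3)$, the key step is the covariance
\[
G_e(T\circ f)=\mu_T(f)^{-2}\,O_T(f)\,G_e(f).
\]

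I would obtain this by differentiating the \M invariance $e(T\circ(f+\varepsilon u))=e(f+\varepsilon u)$ at $\varepsilon=0$. Since $T\circ(f+\varepsilon u)=T\circ f+\varepsilon\,dT_f(u)+O(\varepsilon^2)$, and $dT_f(u)$ is again orthogonal to $(T\circ f)'=dT_f(f')$ (as $dT$ is a conformal, hence angle-preserving, linear map), the left-hand side equals $(dT_f(u),G_e(T\circ f))_{T\circ f}$ and the right-hand side equals $(u,G_e(f))_f$. Inserting $|(T\circ f)'|=\mu_T(f)\,|f'|$ and $(dT_f(u),w)=\mu_T(f)\,(u,O_T(f)^{-1}w)$ into (\ref{L2-inner_product}) and comparing the two integrals for every normal $u\in T_f\ks$ gives $\mu_T(f)^2\,O_T(f)^{-1}G_e(T\circ f)=G_e(f)$, which is the displayed identity. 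No reparametrization of $f$ is introduced at any stage.

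Next I would specialise to transformations fixing $C$ pointwise. An inversion $\iota$ centred at a point of $C$ sends $C$ to a straight line $L$, and conjugating the Euclidean rotations $R_\psi$ about $L$ produces a one-parameter family $T_\psi=\iota\circ R_\psi\circ\iota$ of \Mt s with $T_\psi(C)=C$ held pointwise, so $T_\psi\circ f_0=f_0$. Differentiating $T_\psi\circ f_0=f_0$ forces $dT_\psi(f_0')=f_0'$, whence $\mu_{T_\psi}\equiv1$ along $C$ and $O_{T_\psi}$ fixes the unit tangent while acting on each normal plane $N_{f_0(t)}$; since conjugating a planar rotation by a similarity is again a rotation through the same angle, $O_{T_\psi}(t)$ is rotation of $N_{f_0(t)}$ by $\pm\psi$. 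Substituting into the covariance, the weight $\mu_{T_\psi}\equiv1$ drops out entirely and I obtain $G_e(f_0)=O_{T_\psi}\,G_e(f_0)$. As $G_e(f_0)(t)$ lies in $N_{f_0(t)}$ by the definition of $T_{f_0}\ks$, and $O_{T_\psi}(t)$ rotates that plane by a nonzero angle (take $\psi=\pi/2$), its only fixed vector is $0$; hence $G_e(f_0)(t)=0$ for all $t$.

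The main obstacle is the clean derivation of the covariance identity: one must confirm that $dT_f(u)$ is a genuine normal variation of $T\circ f$, carry the conformal factor correctly through (\ref{L2-inner_product}), and check that the compactness hypothesis in the definition of \M invariance holds for $T_\psi\circ(f_0+\varepsilon u)$ at small $\varepsilon$, which it does since the pole of $T_\psi$ lies off $C$. A secondary delicate point is verifying that $\mu_{T_\psi}\equiv1$ on $C$ and that $O_{T_\psi}$ is a true nonzero rotation of the normal planes rather than a reflection or the identity; here the conformality (angle preservation) of $\iota$ is what guarantees the rotation angle survives the conjugation.
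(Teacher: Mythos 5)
Your argument is correct, and it reaches the conclusion by a genuinely different route from the paper. The paper picks the single involution $R\circ I$ (inversion in the unit sphere centered at the circle's center followed by reflection in the circle's plane), shows that it fixes the round circle $f_0$ pointwise while sending the variation field $u$ to $-u$, and concludes $\frac{d}{d\varepsilon}e(f_0+\varepsilon u)|_{\varepsilon=0}=-\frac{d}{d\varepsilon}e(f_0+\varepsilon u)|_{\varepsilon=0}=0$ for every $u$; the vanishing of $G_e(f_0)$ then follows from nondegeneracy of the $L^2$ pairing. You instead first prove a pointwise covariance law $G_e(T\circ f)=\mu_T(f)^{-2}O_T(f)\,G_e(f)$ for the unweighted gradient (a statement the paper only formulates later, for the weighted gradient, in Theorem \ref{thm1}(2)), and then feed in the one-parameter family $T_\psi$ of \M transformations fixing the circle pointwise and rotating its normal planes; taking $\psi=\pi/2$ forces $G_e(f_0)(t)$ to be a fixed vector of a quarter-turn of $N_{f_0(t)}$, hence zero. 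The two arguments are close relatives --- the paper's $R\circ I$ acts on each normal plane as rotation by $\pi$, i.e.\ it is (conjugate to) your $T_\pi$, and at $\psi=\pi$ your fixed-point step degenerates to the paper's sign-flip --- but yours buys something extra: the covariance identity is reusable (it is essentially the mechanism behind \eqref{f_Mobius_equiv_Phi_gradient2}), and the rotation family makes transparent exactly which symmetry kills the gradient. What the paper's version buys is economy: it needs no decomposition $dT=\mu_T O_T$ and no analysis of how $O_T$ acts on normal planes, only the single computation \eqref{tangent_inversion}. One point to flag, though it is at the same level of rigor as the paper's own proofs: when you identify $\frac{d}{d\varepsilon}e(T\circ(f+\varepsilon u))|_{\varepsilon=0}$ with $(T_\ast u,G_e(T\circ f))_{T\circ f}$ you are applying the gradient pairing along a curve in $\ks$ that is not affine in $\varepsilon$, which implicitly assumes a chain rule for $e$; the paper makes the identical move for $\widetilde f_\varepsilon=R\circ I\circ f_\varepsilon$ and again in the proof of Theorem \ref{thm1}(2), so no gap arises relative to the paper's standing assumptions.
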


\begin{proof}
Let $f_0\colon S^1\to\RR^2\subset\RR^3$ be a unit circle with center the origin and let $u\in T_{f_0}\ks$. Put $f_\varepsilon=f_0+\varepsilon u$ for small $|\varepsilon|$. 
Let $I$ be an inversion in a unit sphere with center the origin and let $R$ be a reflection in $\RR^2\subset\RR^3$. Put $\widetilde f_\varepsilon=R\circ I\circ f_\varepsilon$. 
We first show 
\begin{equation}\label{tangent_inversion}
\left.\frac{\partial}{\partial\varepsilon}\,\widetilde f_\varepsilon(t)\right|_{\varepsilon=0}
=-\left.\frac{\partial}{\partial\varepsilon}\, f_\varepsilon(t)\right|_{\varepsilon=0}
\hspace{0.8cm}(\forall t\in S^1).
\end{equation} 
Let $\Pi_t$ be a plane through the origin and $f_0(t)$ which is perpendicular to $f'_0(t)$. 
It contains both $f_\varepsilon(t)$ and $\widetilde f_\varepsilon(t)$. 
Let $(x(\varepsilon), z(\varepsilon))$ and $(\widetilde x(\varepsilon), \widetilde z(\varepsilon))$ be the coordinates of $f_\varepsilon(t)$ and $\widetilde f_\varepsilon(t)$ in $\Pi_t$ with respect to the axes given by $\overrightarrow{0f_0(t)}$ and the $z$-axis. Then 
\[
\left(\widetilde x(\varepsilon), \widetilde z(\varepsilon)\right)=
\left(\frac{x(\varepsilon)}{x(\varepsilon)^2+z(\varepsilon)^2},\, -\frac{z(\varepsilon)}{x(\varepsilon)^2+z(\varepsilon)^2}
\right),
\]
and therefore, substituting $x(0)=1$ and $z(0)=0$ we obtain 
$\left({\widetilde x\,}'(0), {\widetilde z\,}'(0)\right)=-(x'(0), z'(0))$, which means \eqref{tangent_inversion}. 

As a result, using $\widetilde f_0=f_0$ we have 
\[
\begin{array}{rcl}
\displaystyle \left.\frac{d}{d\varepsilon}e\left(\widetilde f_\varepsilon\right)\right|_{\varepsilon=0} &=&
\displaystyle \int_{S^1}\left(\left.\frac{\partial}{\partial\varepsilon}\,\widetilde f_\varepsilon(t)\right|_{\varepsilon=0},\,G_e\big(\widetilde f_0\big)(t)\right)\,\left|{\widetilde f_0{}}'(t)\right|\,dt \\[5mm]
&=& \displaystyle \int_{S^1}\left(-\left.\frac{\partial}{\partial\varepsilon}\, f_\varepsilon(t)\right|_{\varepsilon=0},\,G_e(f_0)(t)\right)\,\left|{f_0{}}'(t)\right|\,dt \\[5mm]
&=& \displaystyle -\left.\frac{d}{d\varepsilon}e\left(f_\varepsilon\right)\right|_{\varepsilon=0}\,.
\end{array}
\]
Since $e\big(\widetilde f_\varepsilon\big)=e\left(f_\varepsilon\right)$ by the \M invariance of $e$, the above implies 
\[
\left.\frac{d}{d\varepsilon}e\left(f_\varepsilon\right)\right|_{\varepsilon=0}=0\,, 
\]
hence 
\[
0=\left.\frac{d}{d\varepsilon}e(f_0+\varepsilon u)\right|_{\varepsilon=0}
=\int_{S^1}\left(u(t),\,G_e(f_0)(t)\right)\,\left|{f_0{}}'(t)\right|\,dt 
\hspace{0.6cm}(\forall u\in T_{f_0}\ks), 
\]
which implies $G_e(f_0)\equiv0$. 
\end{proof}

Even if a functional is \M invariant, the gradient is not, namely, $T_\ast G_e(f)\ne G_e(T\circ f)$. This is because 
\[
(T_\ast u, T_\ast v)_{T\circ f}\ne (u,v)_f 
\]
in general. For example, when $T$ is a homothety by a factor $k>0$ $(T_\ast u, T_\ast v)_{T\circ f}=k^3 (u,v)_f$. 

\smallskip
Let us look for a weighted inner product that is compatible with \M transformations and that is independent of the parametrization of knots. Proposition \ref{prop_round_circles_critical} implies that we can restrict ourselves to the weight functions defined on $\ks^{\,\circ}\times S^1$, where $\ks^{\,\circ}$ is the set of non-circular knots. Our weight function $\Phi$ should satisfy 
the following conditions: 
\begin{enumerate}
\item[(i)] $\Phi$ is positive on $\ks^{\,\circ}\times S^1$, and $\Phi(f, t)$ is continuous in $t\in S^1$ for each $f\in\ks^{\,\circ}$.
\item[(ii)] $\Phi$ is parametrization independent, i.e., for any diffeomorphism $\rho$ of $S^1$, $\Phi(f\circ\rho, t)=\Phi(f,\rho(t))$ for any $t\in S^1$. 
\item [(iii)] The $\Phi$-weighted inner product 
\begin{equation}\label{f_inner_product_varphi}
\langle u,v\rangle_\Phi=\int_{S^1}(u(t),v(t))\,\Phi(f,t)\,|f'(t)|\, dt \hspace{0.6cm}(u,v \in T_f\ks)
\end{equation}
is M\"obius invariant, i.e., 
\begin{equation}\label{Mobius_equivariance_inner_product}
\langle T_\ast u, T_\ast v\rangle_{\Phi}=\langle u,v\rangle_{\Phi} \hspace{0.6cm}(\forall u,v \in T_f\ks).
\end{equation}
\end{enumerate}

\smallskip
We recall some basics of M\"obius geometry. 
Let $T$ be a M\"obius transformation of $\RR^3\cup\{\infty\}$. 
For a point $p$ in $\RR^3$ put 
\[\left|T'(p)\right|={|\det DT(p)|}^{1/6},\] 
where $DT$ is the Jacobian matrix of $T$. 
In particular, if $T$ is a homothety by $k>0$ then $\left|T'(p)\right|=\sqrt k$, and 
if $T$ is an inversion in a sphere of radius $r$ with center $C$ then 
\begin{equation}\label{T'_inversion}
\left|T'(p)\right| =\frac r{\,|C-p|\,}\,. 
\end{equation}
Note that we have 
\begin{eqnarray}
|T(p)-T(q)|&=&\displaystyle \left|T'(p)\right|\,\left|T'(q)\right|\,|p-q| \hspace{0.4cm} (p,q\in\RR^3), \label{formula_T_distance} \\
\displaystyle \left|(T\circ \gamma)'(t)\right|&=&\displaystyle {\left|T'(\gamma(t))\right|}^2\,|\gamma'(t)| \hspace{1.0cm} (\gamma\colon J\to\RR^3, \mbox{$J$ is an interval}). \label{formula_T_tangent_vector}
\end{eqnarray}

We first investigate without assuming the parametrization independence of $\Phi$. 

\begin{theorem}\label{thm1} 
{\rm (1)} 
Suppose $\Phi\colon \ks^{\,\circ}\times S^1\to\RR$ satisfies the condition {\rm (i)} above. 
Then the $\Phi$-weighted inner product \eqref{f_inner_product_varphi} is M\"obius invariant i.e. \eqref{Mobius_equivariance_inner_product} holds if and only if $\Phi$ satisfies 
\begin{equation}\label{condition_varphi}
\Phi(T\circ f,t)={\left|T'(f(t))\right|}^{-6}\Phi(f,t)
\end{equation}
for any $f$ in $\ks$, $t$ in $S^1$, and for any M\"obius transformation $T$ such that $T(f(S^1))$ is compact. 

\medskip
{\rm (2)} Suppose $\Phi$ satisfies the condition  {\rm (i)} and \eqref{condition_varphi}. 
Suppose a functional $e$ on $\ks$ is M\"obius invariant and has a gradient $G_e$ in $T_f\ks$. 
Define the $\Phi^{-1}$-weighted gradient $\mathcal{G}^\Phi_e(f)$ in $T_f\ks$ by 
$\mathcal{G}^\Phi_e(f)(t)=(\Phi(f,t))^{-1}G_e(f)(t)$ for a non-circular knot $f$ in $\ks^{\,\circ}$ and $\mathcal{G}^\Phi_e(f_0)=0$ for a round circle $f_0$. Then for any $f$ in $\ks$ 
\begin{eqnarray}
\displaystyle \left.\frac{d}{d\varepsilon}e(f+\varepsilon u)\right|_{\varepsilon=0}&=&\displaystyle \left\langle u, \mathcal{G}^\Phi_e(f) \right\rangle_\Phi\, \hspace{0.6cm}(u\in T_f\ks).  \label{f_Mobius_equiv_Phi_gradient1} \\ [2mm]
\mathcal{G}^\Phi_e(T\circ f)&=&T_\ast\left(\mathcal{G}^\Phi_e(f)\right).  \label{f_Mobius_equiv_Phi_gradient2}
\end{eqnarray}
Namely, $\mathcal{G}^\Phi_e$ is a \Mi gradient of $e$ with respect to $\Phi$-weighted inner product. 
\end{theorem}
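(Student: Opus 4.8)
The plan is to reduce everything to the conformal behaviour of the Jacobian $DT$. Since a \Mt is conformal, at each point $p$ its differential is a similarity, $DT(p)=|T'(p)|^2\,O(p)$ with $O(p)$ orthogonal; this is forced by the definition $|T'(p)|=|\det DT(p)|^{1/6}$ and is consistent with \eqref{formula_T_tangent_vector}. Consequently, for $u,v\in T_f\ks$ the pushed-forward fields obey $\big((T_\ast u)(t),(T_\ast v)(t)\big)=|T'(f(t))|^4\,(u(t),v(t))$, and since $O(f(t))$ preserves angles $T_\ast u$ stays orthogonal to $(T\circ f)'$, so $T_\ast\colon T_f\ks\to T_{T\circ f}\ks$ is a linear isomorphism (all of this along $f(S^1)$, which avoids $T^{-1}(\infty)$ because $T(f(S^1))$ is compact). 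Combining the displayed identity with \eqref{formula_T_tangent_vector} I would compute, on the knot $T\circ f$,
\[
\langle T_\ast u,T_\ast v\rangle_\Phi=\int_{S^1}(u,v)\,|T'(f(t))|^{6}\,\Phi(T\circ f,t)\,|f'(t)|\,dt,
\]
the weight having picked up the factor $|T'(f(t))|^{4}\cdot|T'(f(t))|^{2}=|T'(f(t))|^{6}$.

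For part (1) the ``if'' direction is then immediate: under \eqref{condition_varphi} the bracketed weight equals $\Phi(f,t)$ and the last display becomes $\langle u,v\rangle_\Phi$, which is \eqref{Mobius_equivariance_inner_product}. For ``only if'' I would subtract the two integrals and exploit that $\int_{S^1}|u(t)|^2\,h(t)\,dt=0$ for every $u\in T_f\ks$, where $h(t)=\big(|T'(f(t))|^{6}\Phi(T\circ f,t)-\Phi(f,t)\big)|f'(t)|$ is continuous in $t$ by (i); localising $u$ to a small arc (taking a nonvanishing normal field there, which exists since the normal space to $f'(t)$ is $2$-dimensional) forces $h\equiv0$, i.e. \eqref{condition_varphi}.

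The first identity \eqref{f_Mobius_equiv_Phi_gradient1} of part (2) is a one-line substitution: for non-circular $f$,
\[
\langle u,\mathcal{G}^\Phi_e(f)\rangle_\Phi=\int_{S^1}\big(u,\,\Phi(f,t)^{-1}G_e(f)\big)\,\Phi(f,t)\,|f'(t)|\,dt=(u,G_e(f))_f,
\]
which equals $\left.\frac{d}{d\varepsilon}e(f+\varepsilon u)\right|_{\varepsilon=0}$ by \eqref{f_L^2-gradient}; for a round circle both sides vanish, the left by Proposition \ref{prop_round_circles_critical} (so $G_e=0$) and the right by the definition $\mathcal{G}^\Phi_e(f_0)=0$.

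The crux is the equivariance \eqref{f_Mobius_equiv_Phi_gradient2}, and the main obstacle is that composing with $T$ is nonlinear, so $T\circ(f+\varepsilon u)\neq T\circ f+\varepsilon\,T_\ast u$. The resolution is that the two curves agree to first order, $T\circ(f+\varepsilon u)(t)=(T\circ f)(t)+\varepsilon\,(T_\ast u)(t)+O(\varepsilon^2)$, hence share the velocity $T_\ast u$ at $\varepsilon=0$ and give $e$ the same derivative there. I would then check that $T_\ast\mathcal{G}^\Phi_e(f)$ satisfies the characterising relation \eqref{f_Mobius_equiv_Phi_gradient1} at $T\circ f$: for $w=T_\ast u\in T_{T\circ f}\ks$,
\[
\langle w,T_\ast\mathcal{G}^\Phi_e(f)\rangle_\Phi=\langle u,\mathcal{G}^\Phi_e(f)\rangle_\Phi=\left.\frac{d}{d\varepsilon}e(f+\varepsilon u)\right|_{0}=\left.\frac{d}{d\varepsilon}e\big(T\circ(f+\varepsilon u)\big)\right|_{0}=\left.\frac{d}{d\varepsilon}e(T\circ f+\varepsilon w)\right|_{0},
\]
using the \M invariance of $\langle\,,\,\rangle_\Phi$ from part (1) in the first step, \eqref{f_Mobius_equiv_Phi_gradient1} in the second, the \M invariance of $e$ in the third, and the first-order agreement in the fourth. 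Since $T_\ast$ is onto $T_{T\circ f}\ks$ this holds for every $w$, and positive-definiteness of $\langle\,,\,\rangle_\Phi$ (hence uniqueness of the $\Phi^{-1}$-weighted gradient) yields $T_\ast\mathcal{G}^\Phi_e(f)=\mathcal{G}^\Phi_e(T\circ f)$; for a round circle $f_0$ both sides are $0$, since $T\circ f_0$ is again a round circle.
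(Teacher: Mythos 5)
Your proposal is correct and follows essentially the same route as the paper: the same polarization/Jacobian computation reducing $\langle T_\ast u,T_\ast v\rangle_\Phi$ to the weight $|T'(f(t))|^6\Phi(T\circ f,t)$ for part (1), and the same "check the characterising relation at $T\circ f$ and invoke uniqueness of the gradient" argument for \eqref{f_Mobius_equiv_Phi_gradient2}. Your extra details (localising $u$ for the "only if" direction, and the first-order agreement of $T\circ(f+\varepsilon u)$ with $T\circ f+\varepsilon T_\ast u$) merely make explicit steps the paper leaves implicit.
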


\begin{proof}
(1) The formula \eqref{formula_T_tangent_vector} implies 
\[
\begin{array}{rcl}
\left(T_\ast u(t), T_\ast v(t)\right)&=&
\displaystyle \frac12\left[{\left|T_\ast (u+v)(t)\right|}^2-{\left|T_\ast u(t)\right|}^2-{\left|T_\ast v(t)\right|}^2\right] \\[4mm]
&=&\displaystyle \frac12 \, {\left|T'(f(t))\right|}^4 \left[{\left|(u+v)(t)\right|}^2-{\left|u(t)\right|}^2-{\left|v(t)\right|}^2\right] \\[4mm]
&=&\displaystyle {\left|T'(f(t))\right|}^4 (u(t),v(t)), \\[4mm]
\displaystyle \left|(T\circ f)'(t)\right|
&=&{\left|T'(f(t))\right|}^2\, |f'(t)|.
\end{array}
\]
It follows that 
\[
\begin{array}{rcl}
\displaystyle \langle T_\ast u, T_\ast v\rangle_\Phi 
&=&
\displaystyle \int_{S^1} \left(T_\ast u(t), T_\ast v(t)\right)\Phi(T\circ f,t)\left|(T\circ f)'(t)\right|\,dt \\[4mm]
&=&\displaystyle \int_{S^1} \left(u(t), v(t)\right){\left|T'(f(t))\right|}^6\Phi(T\circ f,t)\left|f'(t)\right|\,dt. 
\end{array}
\]
Therefore, $\langle T_\ast u, T_\ast v\rangle_\Phi=\langle u,v\rangle_\Phi$ for any $u$ and $v$ in $T_f\ks^{\,\circ}$ if and only if ${\left|T'(f(t))\right|}^6\Phi(T\circ f,t)=\Phi(f,t)$ for all $t$.

\medskip
(2) The equality \eqref{f_Mobius_equiv_Phi_gradient1} follows from \eqref{f_L^2-gradient}, \eqref{L2-inner_product}, \eqref{f_inner_product_varphi}, and the definition of $\mathcal{G}^\Phi_e$. 

The equality \eqref{f_Mobius_equiv_Phi_gradient2} follows from the uniqueness of the gradient as follows. 
First note that since a M\"obius transformation $T$ is conformal, $T_\ast\left(\mathcal{G}^\Phi_e(f)(t)\right)$ is perpendicular to $T_\ast f'(t)$, and hence $T_\ast\left(\mathcal{G}^\Phi_e(f)\right)\in T_{T\circ f}\ks$. 
By Proposition \ref{prop_round_circles_critical} we may assume that $f$ and $T\circ f$ are not round circles. 

Let $u\in T_f\ks$. The equality \eqref{f_Mobius_equiv_Phi_gradient1} and the \M invariance of $\Phi$-weighted inner product imply 
\begin{equation}\label{f1}
\left.\frac{d}{d\varepsilon}e(f+\varepsilon u)\right|_{\varepsilon=0}=\left\langle u, \mathcal{G}^\Phi_e(f) \right\rangle_\Phi
=\left\langle T_\ast u, T_\ast(\mathcal{G}^\Phi_e(f))\right\rangle_{\Phi}\,.
\end{equation}
On the other hand, since $e(f+\varepsilon u)=e(T\circ (f+\varepsilon u))$ by the M\"obius invariance of $e$ and since 
\[
\left.\frac{\partial}{\partial\varepsilon}\left(T\circ(f+\varepsilon u)\right)(t)\right|_{\varepsilon=0}=T_\ast u(t),
\]
\begin{eqnarray}\label{}
\displaystyle \left.\frac{d}{d\varepsilon}e(f+\varepsilon u)\right|_{\varepsilon=0}
&=& \displaystyle \left.\frac{d}{d\varepsilon}e(T\circ(f+\varepsilon u))\right|_{\varepsilon=0} \nonumber \\ [2mm]
&=& \displaystyle  \displaystyle \left.\frac{d}{d\varepsilon}e(T\circ f+\varepsilon\, T_\ast u)\right|_{\varepsilon=0} \nonumber \\[1mm]
&=& \displaystyle \left\langle T_\ast u, \mathcal{G}^\Phi_e(T\circ f)\right\rangle_{\Phi}\,, \label{f2}
\end{eqnarray}
where the last equality follows from \eqref{f_Mobius_equiv_Phi_gradient1}. 
As $u$ is arbitrary, from \eqref{f1} and \eqref{f2} we have $\mathcal{G}^\Phi_e(T\circ f)=T_\ast(\mathcal{G}^\Phi_e(f))$. 
\end{proof}

\begin{remark} The equation \eqref{formula_T_tangent_vector} implies that if we put $\Phi_0(f,t)={|f'(t)|}^{-3}$ then it satisfies the condition \eqref{condition_varphi}, hence 
\[
\langle u,v\rangle_{\Phi_0}=\int_{S^1}\frac{(u(t),v(t))}{{|f'(t)|}^2}\,dt
\]
is \M invariant, although $\Phi_0$ is not parametrization independent. 
\end{remark}

Suppose $\Phi\colon \ks^{\,\circ}\times S^1\to\RR$ satisfies the condition  {\rm (i)} and \eqref{condition_varphi}. Then  
a functional $E_{\Phi^{1/3}}$ on $\ks^{\,\circ}$ given by 
\[
E_{\Phi^{1/3}}(f)=\int_{S^1}\sqrt[3]{\Phi(f,t)}\,|f'(t)|\,dt
\]
is \M invariant.  
From a viewpoint of geometric knot theory, it would be desirable if $\Phi$ is parametrization independent, $\Phi$ can be continuously extended to a non-negative function on $\ks\times S^1$, and the global minimum of $E_{\Phi^{1/3}}$ is given by round circles. 
The examples we introduce in the following two sections enjoy the above properties.

%%%%%%%%%%%%%%%%%%%%%%%%%%%%%%%%%%%%%%%%%%%%%%%%%%%%%%%%%%%%%%%%%%%%%%%%%%%%%%%%%%%%%
\section{Weight function given by a potential of the \M energy} 
We give examples of $\Phi$ that satisfy the conditions (i), (ii) and (iii) in what follows. 
First example is the simplest one, and it seems the most appropriate to be applied to the study the energy $E$. 

\begin{definition}\label{def_V} \rm (\cite{O1}) 
Define the {\em regularized $r^{-2}$-potential} %or {\em ``modified voltage''} 
of a knot $f\in\mathcal{K}$ at a point $f(s)$ by 
\begin{equation}\label{f_def_V}
V(f,s)=\lim_{\varepsilon\downarrow0}\left(\int_{d_f(s,t)\ge\varepsilon}\,\frac{|f'(t)|\,dt}{{|f(t)-f(s)|}^2}-\frac{\,2\,}\varepsilon\right),
\end{equation}
where $d_f(s,t)$ is the arc-length between $f(s)$ and $f(t)$ along the knot $f(S^1)$. %$d_f(f(s),\,f(t))$
\end{definition}

Obviously it is parametrization independent, i.e., $V(f\circ\rho,s)=V(f,\rho(s))$ for any diffeomorphism $\rho$ of $S^1$. 

We introduce so-called ``wasted length argument'' by Doyle and Schramm reported in \cite{KS} (with a tiny modification), which is an epochal observation in the \M geometric study of knots. 
Let $I$ be an inversion in a unit sphere with center $f(s)$. Let $\widetilde f_s=I\circ f$ be an inverted open knot. Then \eqref{formula_T_tangent_vector} implies that the integrand of $V(f,s)$, 
\[
\frac{|f'(t)|\,dt}{{|f(t)-f(s)|}^2}
\]
is a length element of the inverted open knot. 
This observation implies 
\[
V(f,s)=\lim_{s_1\to s^-, \,s_2\to s^+}\left(d_{\widetilde f_s}(s_1, s_2)-\left|\widetilde f_s(s_1)-\widetilde f_s(s_2)\right|\right),
\]
where $f(s_1)$ and $f(s_2)$ approach $f(s)$ from opposite sides. 
In fact, when $d_f(s,s_1)=d_f(s,s_2)=\varepsilon$ the above equality follows from the fact that $\big|\widetilde f_s(s_1)-\widetilde f_s(s_2)\big|=2/\varepsilon+O(\varepsilon)$. 
Remark $d_{\widetilde f_s}(s_1, s_2)$ is the arc-length of $\widetilde f_s(S^1\setminus (s_1, s_2))$. 
Thus $V(f,s)$ can be interpreted as the difference of the lengths of the open knot inverted at $f(s)$ and the asymptotic line. 
Therefore, $V(f,s)$ is non-negative and is equal to $0$ if and only if the image of $\widetilde f_s$ is a line, which occurs if and only if $f(S^1)$ is a round circle. 

This argument yields the cosine formula of $V$; 
\begin{equation}\label{cosine_formula_V}
V(f,s)=\int_{S^1}\frac{1-\cos\theta_f(s,t)}{{|f(s)-f(t)|}^2}\,|f'(t)|\,dt,
\end{equation}
where $\theta_f(s,t)$ $(0\le\theta_f\le\pi)$ is the angle between two circles, both through $f(s)$ and $f(t)$, one is tangent to the knot at $f(s)$ ad the other at $f(t)$. We call $\theta_f$ the {\em conformal angle}. 
Since $\theta_f$ is defined by using only circles, tangency and angle, it is \M invariant.  

The conformal angle $\theta_f$ is of the order of $|s-t|^2$ near the diagonal set $\Delta=\{(t,t)\,:\,t\in S^1\}$ (\cite{LO}). Therefore, the integrand of \eqref{cosine_formula_V} can be extended to a continuous function on $S^1\times S^1$, which implies that $V(f,s)$ is continuous in $s$. 

Summarizing it up; 

\begin{proposition}\label{Doyle-Schramm} The potential $V(f,s)$ is continuous in $s$, and $V(f,s)\ge0$ for any $f$ in $\mathcal{K}$ and for any $s$ in $S^1$, where the equality holds if and only if $f(S^1)$ is a round circle. 
\end{proposition}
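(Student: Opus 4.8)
The statement collects three assertions---continuity of $s\mapsto V(f,s)$, non-negativity, and the characterization of the equality case---and the discussion preceding the proposition has already produced the two tools I need; the plan is to organize them into one clean argument. For continuity I would work from the cosine formula \eqref{cosine_formula_V}. The only place where its integrand
\[
\frac{1-\cos\theta_f(s,t)}{{|f(s)-f(t)|}^2}\,|f'(t)|
\]
could fail to be continuous is the diagonal $\Delta$, where numerator and denominator both vanish. Using $1-\cos\theta_f=\tfrac12\theta_f^{\,2}+O(\theta_f^{\,4})$ together with the estimate $\theta_f(s,t)=O(|s-t|^2)$ from \cite{LO}, the numerator is $O(|s-t|^4)$ while $|f(s)-f(t)|^2\sim|f'(s)|^2|s-t|^2$; hence the integrand is $O(|s-t|^2)$ and extends to a continuous function on the compact set $S^1\times S^1$. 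Continuity of $s\mapsto V(f,s)$ then follows from the standard continuity of an integral depending continuously on a parameter. (For a round circle $\theta_f\equiv0$ and $V\equiv0$, so there is nothing to check there.)

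For non-negativity I would use the wasted length representation established above, namely
\[
V(f,s)=\lim_{s_1\to s^-,\,s_2\to s^+}\Big(d_{\widetilde f_s}(s_1,s_2)-\big|\widetilde f_s(s_1)-\widetilde f_s(s_2)\big|\Big),
\]
in which $d_{\widetilde f_s}(s_1,s_2)$ is the arc-length of the sub-arc of the inverted curve joining $\widetilde f_s(s_2)$ to $\widetilde f_s(s_1)$ and the subtracted term is the corresponding chord. Since the length of a rectifiable curve never falls below the distance between its endpoints, each truncation is non-negative, and passing to the limit gives $V(f,s)\ge0$.

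The equality case is where I expect the real work to lie, because a vanishing limit of non-negative quantities does not by itself force each quantity to vanish. The plan is to supply the missing monotonicity. Parametrizing the inverted curve by arc-length $\sigma$ and fixing one endpoint $A$, the function $\sigma\mapsto\sigma-|\,\widetilde f_s(\sigma)-A\,|$ has derivative $1-\big(\widetilde f_s{}'(\sigma),\,(\widetilde f_s(\sigma)-A)/|\widetilde f_s(\sigma)-A|\big)\ge0$ by the Cauchy--Schwarz inequality, since the tangent is a unit vector. Applying this to each end in turn, the defect $d_{\widetilde f_s}(s_1,s_2)-|\widetilde f_s(s_1)-\widetilde f_s(s_2)|$ is non-decreasing as the excluded interval $(s_1,s_2)$ shrinks, and is therefore bounded above by its limit $V(f,s)$. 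Consequently $V(f,s)=0$ forces the defect to vanish for every truncation, i.e.\ every sub-arc of $\widetilde f_s$ is a straight segment, so the image of $\widetilde f_s$ is a line $m$.

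It then remains to translate this back to $f$ via the conformal geometry of the inversion $I$ centred at $f(s)$. As $\widetilde f_s$ is unbounded at both ends, $m$ is a full line, and it cannot pass through the centre $f(s)$, since otherwise $I(m)=m$ would force the compact set $f(S^1)=I(\widetilde f_s)\cup\{f(s)\}$ to contain an unbounded line. Hence $m$ misses the centre, $I(m)$ is a circle through $f(s)$, and $f(S^1)$ is a round circle. The converse is immediate: if $f(S^1)$ is a round circle then it passes through $f(s)$, so $\widetilde f_s=I\big(f(S^1)\setminus\{f(s)\}\big)$ is a line, every truncation is an exact segment, and $V(f,s)=0$.
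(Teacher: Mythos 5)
Your proof is correct and follows essentially the same route as the paper: continuity via the cosine formula \eqref{cosine_formula_V} together with the estimate $\theta_f(s,t)=O(|s-t|^2)$ near the diagonal, and non-negativity plus the equality case via the Doyle--Schramm wasted-length interpretation of $V(f,s)$ under inversion at $f(s)$. The one place you add substance is the monotonicity of the length defect $d_{\widetilde f_s}(s_1,s_2)-|\widetilde f_s(s_1)-\widetilde f_s(s_2)|$ in the truncation, which cleanly justifies the step ``$V(f,s)=0$ forces every sub-arc of $\widetilde f_s$ to be a segment'' that the paper asserts without elaboration.
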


The {\em energy} $E$ (\cite{O1}) is given by 
\begin{equation}\label{defMobiusenergy}
\begin{array}{rcl}
E(f)&=&\displaystyle \int_{S^1}V(f,s)\,|f'(s)|\,ds \\[4mm]
&=&\displaystyle \lim_{\varepsilon\downarrow0}\left(\iint_{S^1\times S^1, d_f(s,t)\ge\varepsilon} 
\,\frac{|f'(s)|\,|f'(t)|\,dsdt}{{|f(t)-f(s)|}^2}
- \frac{2L(f(S^1))}\varepsilon\right),
\end{array}
\end{equation}
where $L(f(S^1))$ is the length of the knot. 
Then 
\begin{equation}\label{def_Mobius_energy_FHW}
E(f)+4=\iint_{S^1\times S^1}\left(\frac{1}{{|f(t)-f(s)|}^2}
-\frac1{d_f(s,t)^2} \right) |f'(s)|\,|f'(t)|\,dsdt 
\end{equation}
(\cite{Nak} and \cite{FHW}). The right hand side is called the {\em M\"obius energy} of a knot $f$. 

\begin{lemma} \label{lemma_V}
Let $T$ be a M\"obius transformation such that $T(f(S^1))$ is compact. Then we have 
\begin{equation}\label{V_Mobius}
V({T\circ f},s)={\left|T'(f(s))\right|}^{-2}\,V(f,s).
%\frac{V(f,s)}{\left|T'(f(s))\right|^2}\,.
\end{equation}

\end{lemma}

The lemma follows directly from \eqref{cosine_formula_V} since \eqref{formula_T_distance}, \eqref{formula_T_tangent_vector} and the \M invariance of the conformal angle implies 
\[
\frac{1-\cos\theta_{T\circ f}(s,t)}{{\left|(T\circ f)(s)-(T\circ f)(t)\right|}^2}\,\left|(T\circ f)'(t)\right|
=
\frac{1-\cos\theta_f(s,t)}{{\left|T'(f(s))\right|}^2\,{|f(s)-f(t)|}^2}\,|f'(t)|. 
\]
We introduce an alternative proof using analytic continuation based on the idea of Brylinski \cite{B}. 
Another proof by Hadamard regularization \eqref{f_def_V} will be given in Section \ref{section_Hadamard}. 

\begin{proof}
Define the {\em local Brylinski's beta function} $B_{f,\,s}^{\,\rm loc}(z)$ $(z\in\CC)$ by the meromorphic regularization of
\[
%B_{f,\,s}^{\,\rm loc}(z)=
\int_{S^1}{|f(t)-f(s)|}^z\,|f'(t)|\,dt, %\hspace{0.6cm} (z\in\CC).
\]
which can be carried out as follows. First note that the integral, considered as a function of $z$, 
is well-defined if $\R z>-1$ and is a holomorphic function of $z$ there. 
We can extend the domain by analytic continuation to the whole complex plane to obtain a meromorphic function only with (possible) simple poles at negative odd integers, $z=-1,-3,\dots$. 
Then the regularized $r^{-2}$-potential satisfies $V(f,s)=B_{f,\,s}^{\,\rm loc}(-2)$ (\cite{B}). 
Then by \eqref{formula_T_distance} and \eqref{formula_T_tangent_vector}, 
\[
\begin{array}{rcl}
B_{T\circ f,\,s}^{\,\rm loc}(z)&=&\displaystyle \int_{S^1}{|T(f(t))-T(f(s))|}^z\,\left|{(T\circ f)}'(t)\right|\,dt \\[4mm]
&=&\displaystyle \int_{S^1} {\left(\,{\left|T'(f(t))\right|}\,{\left|T'(f(s))\right|}\,|f(t)-f(s)|\,\right)}^z\, {\left|T'(f(t))\right|}^2\left|f'(t)\right|\,dt \\[4mm]
&=&\displaystyle {\left|T'(f(s))\right|}^z \int_{S^1} {\left|T'(f(t))\right|}^{z+2}\, {|f(t)-f(s)|}^z\, \left|f'(t)\right|\,dt. 
\end{array}
\]
Substituting $z=-2$ we obtain 
\[
V({T\circ f},s)=B_{T\circ f,\,s}^{\,\rm loc}(-2)={\left|T'(f(s))\right|}^{-2}B_{T\circ f,\,s}^{\,\rm loc}(-2)={\left|T'(f(s))\right|}^{-2}\,V(f,s).
\]
\end{proof}

\begin{corollary}\label{cor}
The cube of the regularized $r^{-2}$-potential, $V^3$, satisfies the conditions {\rm (i), (ii)} and {\rm (iii)}. 
Therefore, 
\[
\langle u,v\rangle_{V^3} =\int_{S^1}(u(t),v(t))\,{V(f,t)}^3\,|f'(t)|\, dt
\]
is a \Mi and parametrization independent inner product, and if $e$ is a \Mi functional on $\ks$ with a gradient $G_e$ then $\mathcal{G}^{V^3}_e(f)=(V(f))^{-3}G_e(f)$ $(f\in\ks^{\,\circ})$ is a \Mi and parametrization independent gradient with respect to $\langle \,\cdot\,,\,\cdot\,\rangle_{V^3}$. 
\end{corollary}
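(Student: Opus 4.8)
The plan is to verify the three conditions (i), (ii), (iii) for the weight function $\Phi=V^3$ one at a time, assembling the facts already established for $V$, and then to read off both conclusions directly from Theorem \ref{thm1}. No new analytic work is needed; the whole statement is an assembly of Proposition \ref{Doyle-Schramm}, Lemma \ref{lemma_V}, and Theorem \ref{thm1}.

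First I would check (i). Proposition \ref{Doyle-Schramm} gives $V(f,s)\ge 0$ with equality precisely when $f(S^1)$ is a round circle; hence on the non-circular locus $\ks^{\,\circ}$ we have $V(f,s)>0$, so $V(f,s)^3>0$ as well, and positivity holds. The same proposition states that $s\mapsto V(f,s)$ is continuous, and cubing preserves continuity, so $V^3$ is continuous in the $S^1$-variable; this gives (i). Condition (ii) is then immediate: the parametrization independence $V(f\circ\rho,s)=V(f,\rho(s))$ recorded just after Definition \ref{def_V} gives $V(f\circ\rho,s)^3=V(f,\rho(s))^3$ for every diffeomorphism $\rho$ of $S^1$, which is exactly (ii) for $V^3$.

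For (iii) I would reduce to the transformation law \eqref{condition_varphi} by means of Theorem \ref{thm1}(1). Cubing the identity \eqref{V_Mobius} of Lemma \ref{lemma_V} yields
\[
V(T\circ f,s)^3={\left|T'(f(s))\right|}^{-6}\,V(f,s)^3
\]
for every \M transformation $T$ with $T(f(S^1))$ compact, and this is precisely \eqref{condition_varphi} with $\Phi=V^3$. Since $V^3$ also satisfies (i), Theorem \ref{thm1}(1) applies and shows that $\langle\,\cdot\,,\,\cdot\,\rangle_{V^3}$ is \M invariant; combined with (ii) this proves that the inner product is \Mi and parametrization independent. The gradient statement then follows from Theorem \ref{thm1}(2) specialized to $\Phi=V^3$: equations \eqref{f_Mobius_equiv_Phi_gradient1} and \eqref{f_Mobius_equiv_Phi_gradient2} give that $\mathcal{G}^{V^3}_e(f)=V(f)^{-3}G_e(f)$ represents the first variation of $e$ in the $V^3$-weighted inner product and is \M equivariant.

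The one point that needs extra care, and the only genuine obstacle, is the parametrization independence of this gradient, which is not formal from \M invariance alone. Here I would use that the $L^2$ pairing \eqref{L2-inner_product} is itself parametrization independent: the change of variables $s=\rho(t)$ absorbs the Jacobian $|\rho'|$ into $|f'|$. Consequently, for a parametrization invariant $e$ (the relevant case, since we study shapes of knots), the defining relation \eqref{f_L^2-gradient} applied to the variations $(f+\varepsilon u)\circ\rho=f\circ\rho+\varepsilon\,(u\circ\rho)$ forces $G_e(f\circ\rho)=G_e(f)\circ\rho$. Combining this equivariance of $G_e$ with the parametrization independence (ii) of the weight $V^{-3}$ gives $\mathcal{G}^{V^3}_e(f\circ\rho)=\mathcal{G}^{V^3}_e(f)\circ\rho$, which is the asserted parametrization independence of the \Mi gradient.
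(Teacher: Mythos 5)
Your proposal is correct and follows essentially the same route as the paper, which presents Corollary \ref{cor} as an immediate assembly of Proposition \ref{Doyle-Schramm} (positivity on $\ks^{\,\circ}$ and continuity, giving (i)), the parametrization independence of $V$ noted after Definition \ref{def_V} (giving (ii)), the cube of the transformation law \eqref{V_Mobius} from Lemma \ref{lemma_V} (giving \eqref{condition_varphi} with the exponent $-6$, hence (iii) via Theorem \ref{thm1}(1)), and Theorem \ref{thm1}(2) for the gradient. Your closing paragraph deriving $G_e(f\circ\rho)=G_e(f)\circ\rho$ from the reparametrization invariance of the $L^2$ pairing is a correct supplement that the paper leaves implicit, and it appropriately flags that this step needs $e$ itself to be parametrization invariant.
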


%%%%%%%%%%%%%%%%%%%%%%%%%%%%%%%%%%%%%%%%%%%%%%%%%%%%%%%%%%%%%%%%%%%%%%%%%%%%%%%%%%%%%
\section{Generalization}
Note that \eqref{cosine_formula_V} induces the cosine formula of the energy $E$ by Doyle and Schramm; 
\begin{equation}\label{cosine_formula}
E(f)=\iint_{S^1\times S^1}\frac{1-\cos\theta_f(s,t)}{{|f(s)-f(t)|}^2}\,|f'(s)|\,|f'(t)|\,dsdt,
\end{equation}
where $\theta_f$ is the conformal angle. Since 
\begin{equation}\label{Mobius_invariant_2-form_coeff}
\frac{|f'(s)|\,|f'(t)|}{{|f(s)-f(t)|}^2}
\end{equation}
is \Mi by \eqref{formula_T_distance} and \eqref{formula_T_tangent_vector}, so is the integrand of \eqref{cosine_formula}. 

\medskip
The construction of $\Phi$ by the potential $V$ can be generalized as follows. 

\begin{theorem}\label{thm_gen}
Let $\psi$ be a function from $\ks\times\left((S^1\times S^1)\setminus\Delta\right)$ to $\RR$, where $\Delta$ is the diagonal set $\Delta=\{(t,t)\,:\,t\in S^1\}$. 
Suppose $\psi(f,s,\,\cdot\,)\,|f'(\,\cdot\,)|$ is integrable on $S^1$ for any $f\in\ks$ and for any $s\in S^1$. Put 
\[
\Psi(f,s)=\int_{S^1}\psi(f,s,t)\,|f'(t)|\,dt.
\]
Suppose $\Psi$ is positive on $\ks^{\,\circ}\times S^1$ and $\Psi(f,s)$ is continuous in $s$ for any $f\in\ks$. 

Then, if the $2$-form $\psi(f,s,t)\,|f'(s)|\,|f'(t)|\,dsdt$ on $S^1\times S^1\setminus\Delta$  is \M invariant, i.e. 
\begin{equation}\label{Mobius_invariance_2-form}
\psi(T\circ f,s,t)\,|(T\circ f)'(s)|\,|(T\circ f)'(t)|=%(T\times T)^\ast
\psi(f,s,t)\,|f'(s)|\,|f'(t)|
\end{equation}
for any \Mt $T$ such that $T(f(S^1))$ is compact, then $(\Psi)^3$ satisfies the condition \eqref{condition_varphi} for $\Phi$ in Theorem {\rm \ref{thm1}} {\rm (1)}. 
\end{theorem}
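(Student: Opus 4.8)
The plan is to reduce the required identity \eqref{condition_varphi} for $\Phi=(\Psi)^3$ to the intermediate relation
\[
\Psi(T\circ f,s)=\left|T'(f(s))\right|^{-2}\,\Psi(f,s),
\]
which on cubing produces exactly the exponent $-6$ demanded by \eqref{condition_varphi}. This is the direct analogue of Lemma \ref{lemma_V}: there the same power $-2$ appeared for $V$, and indeed $V$ is the special case of this theorem with $\psi(f,s,t)=\big(1-\cos\theta_f(s,t)\big)\big/{|f(s)-f(t)|}^2$ and $\Psi=V$.

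First I would convert the hypothesis \eqref{Mobius_invariance_2-form} into a pointwise transformation rule for $\psi$ alone. Applying \eqref{formula_T_tangent_vector} to both length elements, namely $|(T\circ f)'(s)|=|T'(f(s))|^2\,|f'(s)|$ and $|(T\circ f)'(t)|=|T'(f(t))|^2\,|f'(t)|$, and cancelling the nonvanishing factors $|f'(s)|$ and $|f'(t)|$ from both sides of \eqref{Mobius_invariance_2-form}, I obtain
\[
\psi(T\circ f,s,t)=\left|T'(f(s))\right|^{-2}\,\left|T'(f(t))\right|^{-2}\,\psi(f,s,t).
\]

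Next I would substitute this rule into $\Psi(T\circ f,s)=\int_{S^1}\psi(T\circ f,s,t)\,|(T\circ f)'(t)|\,dt$, using \eqref{formula_T_tangent_vector} once more for the length element. The factor $|T'(f(t))|^{-2}$ coming from the transformation rule then cancels against the factor $|T'(f(t))|^2$ coming from $|(T\circ f)'(t)|$, whereas $|T'(f(s))|^{-2}$ does not depend on the integration variable $t$ and factors out of the integral. What survives is exactly $|T'(f(s))|^{-2}\int_{S^1}\psi(f,s,t)\,|f'(t)|\,dt=|T'(f(s))|^{-2}\,\Psi(f,s)$, which is the intermediate relation; cubing both sides gives \eqref{condition_varphi}.

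The computation is routine once the transformation rule for $\psi$ has been isolated, and there is no serious obstacle; the only point that needs a little care is the bookkeeping of the powers of $|T'|$ evaluated at $f(s)$ as opposed to $f(t)$, together with the observation that precisely the $t$-dependent powers cancel so that $|T'(f(s))|^{-2}$ remains as a global prefactor. The assumed positivity and continuity of $\Psi$ play no role in establishing \eqref{condition_varphi} itself; rather, combined with $(\Psi)^3>0$, they guarantee that $(\Psi)^3$ also satisfies condition {\rm (i)}, so that Theorem \ref{thm1} is genuinely applicable.
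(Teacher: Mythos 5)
Your proposal is correct and follows essentially the same route as the paper: the paper's (very terse) proof likewise integrates the \Mi $2$-form identity over $t$ using \eqref{formula_T_tangent_vector} to obtain $\Psi(T\circ f,s)\,{\left|T'(f(s))\right|}^2=\Psi(f,s)$, and then the cubing step gives the exponent $-6$ in \eqref{condition_varphi}. Your version simply spells out the cancellation of the $t$-dependent powers of $|T'|$ in more detail, which is fine.
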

\begin{proof} 
By \eqref{formula_T_tangent_vector}, \eqref{Mobius_invariance_2-form} implies 
\[
\Psi(T\circ f,s)\,{\left|T'(f(s))\right|}^2=\Psi(f,s).
\]
\end{proof}

In the above scheme, the weight function studied in the previous section is given by 
\[
\psi(f,s,t)=\frac{1-\cos\theta_f}{{|f(s)-f(t)|}^2}\,. 
\]

Since \eqref{Mobius_invariant_2-form_coeff} is \M invariant, if $\mu(\theta)$ is a continuous function on $[0,\pi]$ that is positive on $(0,\pi)$ and is of the order of $\theta$ (to be precise, $O(\theta^{1/2+\varepsilon})$ for some $\varepsilon>0$) near $\theta=0$, then 
\begin{equation}\label{psi_conf_angle}
\psi(f,s,t)=\frac{\mu(\theta_f(s,t))}{{|f(s)-f(t)|}^2}
\end{equation}
satisfies the condition in Theorem \ref{thm_gen}. 
Furthermore, $\Psi$ is parametrization independent. 

Let us give examples of $\mu(\theta)$ which have been studied in geometric knot theory to produce knot energies by 
\[
E_\mu(f)=\iint_{S^1\times S^1}\mu(\theta_f(s,t))\,\frac{\,|f'(s)|\,|f'(t)|\,}{{|f(s)-f(t)|}^2}\,dsdt
\]
(cf. \cite{KS}, Section 2).
\begin{enumerate}
\item As we have seen, the energy $E$ \eqref{defMobiusenergy} is given by $\mu(\theta)=1-\cos\theta$. 
\item The absolute sine energy (Section 4 of \cite{KS}) or the absolute imaginary cross-ratio energy (Section 5 of \cite{LO}) is given by  $\mu(\theta)=\sin\theta$. Remark our convention $0\le\theta_f\le\pi$ implies $\sin\theta_f\ge0$. 
\item The measure of acyclicity (Section 6 of \cite{LO}) is given by $\mu(\theta)=(\pi/4)(\theta-\theta\sin\theta)$ (Proposition 6.13 of \cite{LO}). 
\end{enumerate}

Note that $\psi$ of the form \eqref{psi_conf_angle} depends only on the first-order information of the knot $f$. 
Conversely, if a \Mi function $\psi(f,s,t)$ depends only on the first-order information of the knot $f$, then $\psi$ should be expressed in terms of \eqref{Mobius_invariant_2-form_coeff} and the conformal angle $\theta_f$ (\cite{LO} page 234). 
In fact, the $2$-form ${|f(s)-f(t)|}^{-2}\,|f'(s)|\,|f'(t)|\,dsdt$ and $\theta_f$ are the absolute value and the argument of the {\em infinitesimal cross ratio} of the knot (\cite{LO}), which is the cross ratio of the four points $f(s), f(s+ds), f(t)$ and $f(t+dt)$, where the four points are considered as complex numbers through a stereographic projection from a sphere through these four points to a plane, which is identified with $\CC$. 
Now the above assertion follows from the fact that the conjugacy class of the cross ratio is essentially the only \M invariant of the set of (ordered) four points in $\RR^3$.

%%%%%%%%%%%%%%%%%%%%%%%%%%%%%%%%%%%%%%%%%%%%%%%%%%%%%%%%%%%%%%%%%%%%%%
\section{Proof of \M invariance by Hadamard regularization}\label{section_Hadamard} 
%%%%%%%%%%%%%%%%%%%%%%%%%%%%%%%%%%%%%%%%%%%%%%%%%%%%%%%%%%%%%%%%%%%%%%
The gradient of the energy $E$, $G_E$, was given in \cite{FHW}. 
Corollary \ref{cor} shows that $\mathcal{G}_E^{V^3}(f)=(V(f))^{-3}G_E(f)$ is M\"obius invariant. 
In this section we give an outline of the proof of it by Hadamard regularization. 

Hadamard regularization is a method to obtain a finite value from a divergent integral. Suppose $\int_X\omega$ diverges on $\Delta\subset X$. Integrate $\omega$ on the complement of an $\varepsilon$ neighbourhood of $\Delta$, expand the resulting value in a (Laurent) series of $\varepsilon$, and finally take the constant term, which is called {\em Hadamard's finite part}, denoted by $\Pf\int_X\omega$. 
It is a kind of generalization of Cauchy's principal value. 
\\ \indent
Let us first show Lemma \ref{lemma_V} by Hadamard regularization. 
It is enough to show it when $T$ is an inversion $I$ in a unit sphere with center the origin and $f(S^1)$ does not pass through the origin. 
Since 
\[
V(f,s)=\Pf\int_{S^1}\frac1{|f(t)-f(s)|^2}\,|f'(t)|\,dt,
\]
we have 
\[
V({I\circ f},s)=\Pf\int_{S^1}\frac{|f(s)|^2|f(t)|^2}{|f(t)-f(s)|^2}\,\frac{|f'(t)|}{|f(t)|^2}\,dt=|f(s)|^2\,V(f,s),
\]
which proves \eqref{V_Mobius} since $|I'(f(s))|=|f(s)|^{-1}$ by \eqref{T'_inversion}.  

Let $P_{f'(s)^\perp}:\Bbb R^3\to\Bbb R^3$ be the orthogonal projection to $\left(\mbox{Span}\langle f'(s) \rangle \right)^\perp$. Then $G_E(f)(s)$ is given by
\begin{equation}\label{gradient_FHW}
%G_f(E_{\RR^3})(s)=
2\,\mbox{\rm p.v.}\int_{S^1}\left\{2\,\dfrac{P_{f'(s)^\perp}(f(t)-f(s))}
{{|f(t)-f(s)|}^2}
-\dfrac1{|f'(s)|}\,\dfrac d{ds}\left(
\dfrac{f'(s)}{|f'(s)|}\right)\right\} 
\dfrac{|f'(t)|}{{|f(t)-f(s)|}^2}\,dt,
\end{equation}
where {\rm p.v.} means Cauchy's principal value integral (\cite{FHW}). 
Note that 
\[
\dfrac1{|f'(s)|}\,\dfrac d{ds}\left(\dfrac{f'(s)}{|f'(s)|}\right)
=\frac{f''(s)}{{|f'(s)|}^2}-\frac{(f'(s),f''(s))}{{|f'(s)|}^4}f'(s),
\]
and that it is perpendicular to $f'(s)$. 
\\ \indent
In the formulae in what follows we fix $s$ in $S^1$ and may omit $(s)$ from $f(s)$, $f'(s)$, and $f''(s)$ for the sake of visible clarity when the formulae are complicated. 

Using Bouquet's formula that gives the expansion of $f(t)$ with respect to the Frenet frame at $f(s)$ in a series of $t-s$, which can be deduced from Frenet-Serret formulas, we have 
\[
\int_{d_f(t,s)\ge\e}\frac{f(t)-f(s)}{{|f(t)-f(s)|}^4}\,|f'(t)|\,dt
=\frac1\e\left(\frac{f''}{{|f'|}^2}-\frac{(f',f'')}{{|f'|}^4}f'\right)+O(1), 
%
%=\frac1\e\left(\frac{f''(s)}{{|f'(s)|}^2}-\frac{(f'(s),f''(s))}{{|f'(s)|}^4}f'(s)\right)+O(1), 
\]
which implies 
\[\begin{array}{l}
\displaystyle \Pf\int_{S^1}\frac{f(t)-f(s)}{{|f(t)-f(s)|}^4}\,|f'(t)|\,dt \\[4mm]
=\displaystyle \lim_{\e\to0^+}\left[
\int_{d_f(t,s)\ge\e}\frac{f(t)-f(s)}{{|f(t)-f(s)|}^4}\,|f'(t)|\,dt
-\frac1\e\left(\frac{f''}{{|f'|}^2}-\frac{(f',f'')}{{|f'|}^4}f'\right)
%
%-\frac1\e\left(\frac{f''(s)}{{|f'(s)|}^2}-\frac{(f'(s),f''(s))}{{|f'(s)|}^4}f'(s)\right)
\right].
\end{array}
\]
Together with 
\[
\int_{d_f(t,s)\ge\e}\frac{|f'(t)|}{{|f(t)-f(s)|}^2}\,dt
=V(f,s)+\frac2\e+O(\e), 
\]
we have 
\[\begin{array}{l}
\displaystyle \Pf\int_{S^1}\frac{f(t)-f(s)}{{|f(t)-f(s)|}^4}\,|f'(t)|\,dt \\[4mm]
=\displaystyle \int_{S^1} \left[ \frac{f(t)-f(s)}{{|f(t)-f(s)|}^4}\,|f'(t)|
-\frac{|f'(t)|}{2{|f(t)-f(s)|}^2}\left(\frac{f''}{{|f'|}^2}-\frac{(f',f'')}{{|f'|}^4}f'\right)
\right]dt \\[4mm]
\phantom{=}\displaystyle +\frac12V(f,s)\left(\frac{f''}{{|f'|}^2}-\frac{(f',f'')}{{|f'|}^4}f'\right).
%
%-\frac{|f'(t)|}{2{|f(t)-f(s)|}^2}\left(\frac{f''(s)}{{|f'(s)|}^2}-\frac{(f'(s),f''(s))}{{|f'(s)|}^4}f'(s)\right) \right]dt \\[4mm]
%
%\phantom{=}\displaystyle +\frac12V(f,s)\left(\frac{f''(s)}{{|f'(s)|}^2}-\frac{(f'(s),f''(s))}{{|f'(s)|}^4}f'(s)\right).
\end{array}
\]
Taking the image of the above under $P_{f'(s)^\perp}$ and comparing it with \eqref{gradient_FHW}, one obtain
\begin{equation}\label{grad_FHW-Pf}
G_E(f)(s)
=2P_{f'(s)^\perp}\left[
2\,\Pf\int_{S^1}\frac{f(t)-f(s)}{{|f(t)-f(s)|}^4}\,|f'(t)|\,dt
-\frac{V(f,s)}{{|f'(s)|}^2}\,f''(s)
\right]. 
\end{equation}
Put
\begin{eqnarray}\label{}
u_1(f,s)&=&\displaystyle 2\,\Pf\int_{S^1}\frac{f(t)-f(s)}{{|f(t)-f(s)|}^4}\,|f'(t)|\,dt,  \label{u1} \\
u_2(f,s)&=&\displaystyle -\frac{V(f,s)}{{|f'(s)|}^2}\,f''(s), \label{u2} \\
u_3(f,s)&=&\displaystyle -\left(u_1(f,s),\frac{f'(s)}{|f'(s)|}\right)\frac{f'(s)}{|f'(s)|}, \label{u3} \\
u_4(f,s)&=&\displaystyle -\left(u_2(f,s),\frac{f'(s)}{|f'(s)|}\right)\frac{f'(s)}{|f'(s)|}. \label{u4} 
\end{eqnarray}
Then 
\begin{equation}\label{G_E_u1u4}
G_E(f)(s)=2(u_1(f,s)+u_2(f,s)+u_3(f,s)+u_4(f,s)).
\end{equation}
\begin{lemma}\label{lemma_cond_MI}
Let $u$ be a map from $\mathcal{K}\times S^1$ to $\RR^3$ that maps a pair $(f,s)$ to $u(f,s)$ in $T_{f(s)}\RR^3\cong\RR^3$. 
Then $V^{-3}u$ is \M invariant, i.e. 
\[
{V(T\circ f,s)}^{-3}u(T\circ f,s)=T_\ast\left({V(f,s)}^{-3}u(f,s)\right)
\]
for any $f$ in $\mathcal{K}$, $s$ in $S^1$, and a \Mt $T$ such that $T(f(S^1))$ is comapct if and only if the following conditions are satisfied for any $f$ and $s$. 
\begin{enumerate}
\item For any translation $p$ of $\RR^3$, $u(p\circ f,s)=u(f,s)$ holds. 
\item For any homothety by $k>0$, $u(kf,s)=k^{-2}u(f,s)$ holds.  
\item Let $I$ be an inversion in a unit sphere with center the origin. If $f(S^1)$ does not pass through the origin then  
\begin{equation}\label{cond_u_MI}
u(I\circ f,s)-{|f(s)|}^4u(f,s)+2{|f(s)|}^2\left(u(f,s),f(s)\right)f(s)=0
\end{equation}
holds. 
\end{enumerate}
\end{lemma}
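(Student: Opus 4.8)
The plan is to rewrite the stated equivariance of $V^{-3}u$ as a single multiplicative identity, to show that the class of transformations satisfying this identity is a group, and then to use that the three operations in (1)--(3) generate the whole \M group. First I would fix a non-circular knot $f$, so that $V(f,s)>0$ by Proposition~\ref{Doyle-Schramm}, and an admissible $T$. Since $T_\ast$ is linear and Lemma~\ref{lemma_V} gives $V(T\circ f,s)^{-3}=|T'(f(s))|^{6}V(f,s)^{-3}$, the defining equality is equivalent, after cancelling the positive scalar $V(f,s)^{-3}$, to
\[
|T'(f(s))|^{6}\,u(T\circ f,s)=T_\ast\big(u(f,s)\big)\qquad(\ast)
\]
(the round-circle case being covered by the convention that $V^{-3}u$ vanishes there, consistently with Proposition~\ref{prop_round_circles_critical}). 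Thus everything reduces to characterizing the \Mt satisfying $(\ast)$.

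The ``only if'' direction is then a direct substitution of the three generators into $(\ast)$. For a translation $|T'|\equiv1$ and $T_\ast=\mathrm{Id}$, so $(\ast)$ is precisely (1); for the homothety $x\mapsto kx$ one has $|T'|^{6}=k^{3}$ and $T_\ast=k\,\mathrm{Id}$, so $(\ast)$ becomes $k^{3}u(kf,s)=k\,u(f,s)$, which is (2); for the inversion $I$, \eqref{T'_inversion} gives $|I'(f(s))|=|f(s)|^{-1}$, and the differential of $I$ at $f(s)$ acts by $w\mapsto|f(s)|^{-2}\big(w-2|f(s)|^{-2}(w,f(s))f(s)\big)$, so that clearing denominators in $(\ast)$ reproduces exactly \eqref{cond_u_MI}. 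Hence \M equivariance forces (1)--(3).

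For the converse the key remark is that $\mathcal G=\{T:(\ast)\text{ holds for all }f,s\}$ is a subgroup of the \M group. This rests on two cocycle identities, the chain rule $(S\circ T)_\ast=S_\ast\circ T_\ast$ and the conformal-factor relation $|(S\circ T)'(p)|=|S'(T(p))|\,|T'(p)|$ (both immediate from $|T'|=|\det DT|^{1/6}$ and \eqref{formula_T_tangent_vector}); writing $(\ast)$ as $u(T\circ f,s)=|T'(f(s))|^{-6}T_\ast u(f,s)$ and substituting shows $(\ast)$ passes to $S\circ T$ and to $T^{-1}$. By (1)--(3) the group $\mathcal G$ contains all translations, all homotheties and the single inversion $I$, hence the whole subgroup they generate. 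I would then check that this subgroup is all of the \M group of $\RR^3\cup\{\infty\}$: conjugating $I$ by a translation and a homothety yields the inversion in an arbitrary sphere; a product of two inversions in spheres meeting in a circle is an elliptic element fixing that circle pointwise, and conjugating it by an inversion that sends the circle to a line produces an arbitrary Euclidean rotation; finally, conjugating a sphere-inversion by a map carrying its sphere to a plane produces an arbitrary reflection. Thus all similarities and inversions, and so every \Mt, lie in $\mathcal G$.

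The main obstacle will be this last generation step: one must produce every \Mt as a \emph{finite} composition of translations, homotheties and $I$ (so that no continuity of $u$ in the knot is required), and must arrange, at each intermediate stage, that the chosen sphere or plane keeps the compact, nowhere-dense curve $f(S^1)$ off the pole of the map, so that every factor remains admissible under the compactness hypothesis. Once $(\ast)$ is isolated and the group structure of $\mathcal G$ is in hand, the remaining computations in the first two steps are routine.
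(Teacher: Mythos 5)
Your proposal is correct and follows essentially the same route as the paper: reduce the equivariance of $V^{-3}u$ to the multiplicative identity $|T'(f(s))|^{6}u(T\circ f,s)=T_\ast u(f,s)$ via Lemma \ref{lemma_V}, verify it on translations, homotheties and the unit inversion (the inversion computation with $I_\ast(v)=v/|f|^2-2(v,f)f/|f|^4$ is exactly the paper's), and invoke the fact that these generate the \M group. The paper's own proof records only the inversion computation and leaves the trivial generator checks and the group-generation/admissibility argument implicit, so your explicit treatment of those points (including the care needed to keep the pole off $f(S^1)$ at each stage) is a welcome amplification rather than a divergence.
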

Let the left hand side of \eqref{cond_u_MI} be denoted by $J(u,f,s)$. 
\begin{proof}
First note that for $v\in T_{f(s)}\RR^3$ we have 
\[
I_\ast(v)=\frac{v}{{|f|}^2}-2\frac{(v,f)}{{|f|}^4}f. 
%
%I_\ast(v)=\frac{v}{{|f(s)|}^2}-2\frac{(v,f(s))}{{|f(s)|}^4}f(s), 
\]
where we omitted $(s)$ from $f(s)$ as before. 
Put $\tilde f=I\circ f$. 
Then by \eqref{V_Mobius} we have 
\[
\frac{u(\tilde f,s)}{{V(\tilde f,s)}^3}-I_\ast\left(\frac{u(f,s)}{{V(f,s)}^3}\right)
=\frac1{{V(f,s)}^3}\left(
\frac{u(\tilde f,s)}{{|f|}^6}-\frac{u(f,s)}{{|f|}^2}+2\frac{\left(u(f,s),f\right)}{{|f|}^4}f
\right), 
%
%=\frac1{{V(f,s)}^3}\left(\frac{u(\tilde f,s)}{{|f(s)|}^6}-\frac{u(f,s)}{{|f(s)|}^2}+2\frac{\left(u(f,s),f(s)\right)}{{|f(s)|}^4}f \right), 
\]
which implies \eqref{cond_u_MI}. 
\end{proof}
\begin{lemma}\label{lemma_Ju14}
The following equalities hold, where we omit $(s)$ from $f(s)$ and $f'(s)$. % in what follows, 
\begin{eqnarray}\label{}
J(u_1,f,s)&=&-2V(f,s){|f|}^2f, \label{Ju1} \\[4mm]  %J(u_1,f,s)&=&-2V(f,s){|f(s)|}^2f(s), 
J(u_2,f,s)&=&\displaystyle 2V(f,s){|f|}^2f-8\frac{V(f,s){(f,f')}^2}{{|f'|}^2}f+4\frac{V(f,s){|f|}^2(f,f')}{{|f'|}^2}f', \>\>\> \label{Ju2} \nonumber \\[4mm]
%
%J(u_2,f,s)&=&\displaystyle 2V(f,s){|f(s)|}^2f(s)-8\frac{V(f,s){(f(s),f'(s))}^2}{{|f'(s)|}^2}f(s)+4\frac{V(f,s){|f(s)|}^2(f(s),f'(s))}{{|f'(s)|}^2}f'(s) 
%
J(u_3,f,s)&=&\displaystyle 4\frac{V(f,s){(f,f')}^2}{{|f'|}^2}f-2\frac{V(f,s){|f|}^2(f,f')}{{|f'|}^2}f',  \label{Ju3} \nonumber \\[4mm]
J(u_4,f,s)&=&\displaystyle 4\frac{V(f,s){(f,f')}^2}{{|f'|}^2}f-2\frac{V(f,s){|f|}^2(f,f')}{{|f'|}^2}f'. \label{Ju4} \nonumber
\end{eqnarray}
\end{lemma}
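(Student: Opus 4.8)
The plan is to prove the four identities by direct substitution into the definition $J(u_i,f,s)=u_i(\tilde f,s)-{|f|}^4u_i(f,s)+2{|f|}^2(u_i(f,s),f)\,f$, where $\tilde f=I\circ f$ and, as in the paper, $f,f',f''$ abbreviate $f(s),f'(s),f''(s)$. Everything reduces to recording how the data at $s$ transform under the inversion $I$. From the proof of Lemma \ref{lemma_cond_MI} the differential is $I_\ast(v)=v/{|f|}^2-2(v,f)\,f/{|f|}^4$, so $\tilde f'(s)=I_\ast(f'(s))$; conformality gives $|\tilde f'(s)|^2={|f'|}^2/{|f|}^4$; and Lemma \ref{lemma_V} gives $V(\tilde f,s)={|f|}^2V(f,s)$. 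Differentiating $\tilde f'$ once more yields the one longer ingredient,
\[
\tilde f''(s)=\frac{f''}{{|f|}^2}-\frac{4(f,f')}{{|f|}^4}\,f'-\frac{2\left({|f'|}^2+(f,f'')\right)}{{|f|}^4}\,f+\frac{8(f,f')^2}{{|f|}^6}\,f.
\]
I would assemble these four formulas first.

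The cases $u_2,u_3,u_4$ are then purely algebraic. For $u_2$, combining $V(\tilde f,s)/|\tilde f'(s)|^2={|f|}^6V/{|f'|}^2$ with the formula for $\tilde f''$ gives $u_2(\tilde f,s)$ explicitly; substituting into $J$, the $f''$-term cancels against $-{|f|}^4u_2(f,s)$ and the $(f,f'')$-terms cancel against $2{|f|}^2(u_2,f)\,f$, leaving the asserted value of $J(u_2,f,s)$. Since $u_3,u_4$ are the components $-(u_i,f'/{|f'|})\,f'/{|f'|}$ of $u_1,u_2$ along $f'$, I would compute the scalars $(u_1(\tilde f,s),\tilde f'(s))$ and $(u_2(\tilde f,s),\tilde f'(s))$, divide by $|\tilde f'(s)|^2$, and resolve $-(\,\cdot\,)\,\tilde f'(s)$ into its $f'$- and $f$-parts; for $u_4$ the inner product is cleanest through $(\tilde f'',\tilde f')=\tfrac12\frac{d}{dt}{|\tilde f'|}^2$, which avoids re-using the long expression for $\tilde f''$. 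In each case the terms carrying $(u_i,f')$ cancel in $J$, producing the stated identities for $J(u_3,f,s)$ and $J(u_4,f,s)$.

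The only nonlocal case is $u_1$. Using $|\tilde f(t)-\tilde f(s)|={|f(t)-f(s)|}/({|f(t)|\,|f(s)|})$ and $|\tilde f'(t)|=|f'(t)|/{|f(t)|}^2$, the powers of $|f(t)|$ cancel and one finds $u_1(\tilde f,s)=2{|f(s)|}^2\,\Pf\int_{S^1}\big({|f(s)|}^2f(t)-{|f(t)|}^2f(s)\big)\,|f'(t)|\,dt\big/{|f(t)-f(s)|}^4$. The crux is the algebraic splitting
\[
{|f(s)|}^2f(t)-{|f(t)|}^2f(s)={|f(s)|}^2(f(t)-f(s))-2(f(s),f(t)-f(s))\,f(s)-{|f(t)-f(s)|}^2f(s),
\]
after which the three summands contribute $\tfrac12{|f(s)|}^2u_1(f,s)$, then $-(f(s),u_1(f,s))\,f(s)$ (pulling the constant vector $f(s)$ through the finite part), and finally $-V(f,s)\,f(s)$. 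Thus $u_1(\tilde f,s)={|f|}^4u_1(f,s)-2{|f|}^2(f,u_1(f,s))\,f-2{|f|}^2V(f,s)\,f$, and substituting into $J$ cancels the first two groups against $-{|f|}^4u_1(f,s)$ and $2{|f|}^2(u_1,f)\,f$, leaving $-2V(f,s){|f|}^2f$, which is \eqref{Ju1}.

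The hard part will be the finite-part bookkeeping hidden in the $u_1$ computation: the natural Hadamard cutoff for $u_1(\tilde f,s)$ is arc length along $\tilde f$, whereas the three resulting integrals are regularized along $f$, and I must also justify factoring the $t$-independent vector $f(s)$ through $\Pf$. Both points rest on the fact that the divergences here are simple $1/\e$ poles with no logarithmic term, so the finite part is unchanged under the constant rescaling $d_{\tilde f}(s,t)\approx{|f(s)|}^{-2}d_f(s,t)$ relating the two cutoffs (its higher-order corrections perturb only already-integrable terms), and $\Pf$ is linear on integrands sharing this common pole structure. This is exactly the robustness implicitly used in the Hadamard proof of Lemma \ref{lemma_V}. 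Granting it, the splitting and the factoring are legitimate; and since $J$ is linear in $u$, summing the four identities gives $J(u_1+u_2+u_3+u_4,f,s)=0$, which is the point of the computation for establishing condition (3) of Lemma \ref{lemma_cond_MI} for $G_E$.
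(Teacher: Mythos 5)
Your proof is correct and follows essentially the same route as the paper's: for \eqref{Ju1} the paper uses exactly your algebraic splitting of ${|f(s)|}^2f(t)-{|f(t)|}^2f(s)$ inside the finite part (merely regrouped, with the $-{|f(t)-f(s)|}^2f(s)$ piece emerging only after the $2{|f|}^2(u_1,f)f$ term of $J$ is added), and it declares the remaining three identities ``automatic,'' which your explicit computations via $\tilde f'$, $\tilde f''$, $V(\tilde f,s)={|f|}^2V(f,s)$ correctly supply. One small caveat: the cutoff-bookkeeping issue you flag is real (and glossed over by the paper too), but the finite part survives the change of cutoff not because the higher-order corrections ``perturb only integrable terms'' --- they shift the cutoff by $O(\e^2)$ where the integrand is $O(\e^{-2})$ --- but because those corrections enter with opposite signs on the two sides of $s$ and cancel.
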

\begin{proof}
Let us prove \eqref{Ju1}. The proof of the other formulae is automatic. Since
\[
\begin{array}{rcl}
u_1(\tilde f,s)&=&\displaystyle 2\Pf\int_{S^1}\frac{\frac{f(t)}{{|f(t)|}^2}-\frac{f(s)}{{|f(s)|}^2}}{\frac{{|f(t)-f(s)|}^4}{{|f(t)|}^4{|f(s)|}^4}} \cdot 
\frac{|f'(t)|}{{|f(t)|}^2}\,dt \\[6mm]
&=&\displaystyle 2\Pf\int_{S^1}\frac{{|f(s)|}^4f(t)-{|f(s)|}^2{|f(t)|}^2f(s)}{{|f(t)-f(s)|}^4}|f'(t)|\,dt \\[4mm]
&=&\displaystyle {|f(s)|}^4\,u_1(f,s)
+2{|f(s)|}^2
\left(\Pf\int_{S^1}\frac{{|f(s)|}^2-{|f(t)|}^2}{{|f(t)-f(s)|}^4}|f'(t)|\,dt\right)f(s),
\end{array}
\]
we have 
\[
\begin{array}{rcl}
J(u_1,f,s)&=&\displaystyle 2{|f(s)|}^2
\left(\Pf\int_{S^1}\frac{{|f(s)|}^2-{|f(t)|}^2+2(f(t)-f(s),f(s))}{{|f(t)-f(s)|}^4}|f'(t)|\,dt\right)f(s) \\[4mm]
&=&\displaystyle 2{|f(s)|}^2
\left(\Pf\int_{S^1}\frac{-|f'(t)|}{{|f(t)-f(s)|}^2}\,dt\right)f(s) \\[4mm]
&=& \displaystyle -2{|f(s)|}^2\,V(f,s)f(s).
\end{array}
\]
\end{proof}
Now \eqref{G_E_u1u4}, Lemma \ref{lemma_cond_MI} and Lemma \ref{lemma_Ju14} imply 
\begin{corollary}
The $V^{-3}$-weighted gradient $\mathcal{G}_E^{V^3}(f)=(V(f))^{-3}G_E(f)$ is \M invariant. 
\end{corollary}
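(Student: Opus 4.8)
The plan is to invoke Lemma~\ref{lemma_cond_MI} with the choice $u=G_E$: once the three conditions listed there are verified for the Freedman--He--Wang gradient, the lemma immediately yields that $V^{-3}G_E=\mathcal{G}_E^{V^3}$ is \M invariant. Thus the whole argument reduces to checking translation invariance (1), the homothety scaling (2), and the inversion identity $J(G_E,f,s)=0$ (3) for the explicit gradient recorded in \eqref{grad_FHW-Pf} and decomposed in \eqref{G_E_u1u4}.

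Condition (3) is where Lemma~\ref{lemma_Ju14} does the work. First I would note that $J(\,\cdot\,,f,s)$, being the left-hand side of \eqref{cond_u_MI}, is linear in its vector-field argument, since each of its three terms depends linearly on the values $u(I\circ f,s)$ and $u(f,s)$. Combined with $G_E=2(u_1+u_2+u_3+u_4)$ from \eqref{G_E_u1u4} this gives $J(G_E,f,s)=2\sum_{i=1}^4 J(u_i,f,s)$, so it suffices to add the four vectors supplied by Lemma~\ref{lemma_Ju14} and check that they cancel. Suppressing the argument $s$ as in that lemma and writing $V$ for $V(f,s)$, the coefficient of $f$ in the sum is $-2V|f|^2+2V|f|^2+(-8+4+4)V(f,f')^2/|f'|^2=0$ and the coefficient of $f'$ is $(4-2-2)V|f|^2(f,f')/|f'|^2=0$; hence $J(G_E,f,s)=0$.

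For conditions (1) and (2) I would argue directly from \eqref{grad_FHW-Pf}. Translation invariance is clear, since $G_E$ depends on $f$ only through the differences $f(t)-f(s)$ and the derivatives $f',f''$, which are unchanged by $f\mapsto p\circ f$. For a homothety $f\mapsto kf$ ($k>0$) I would combine $V(kf,s)=k^{-1}V(f,s)$ (immediate from \eqref{cosine_formula_V}, as the conformal angle is scale invariant while the integrand scales by $k^{-1}$) with $f''\mapsto kf''$ and the scale invariance of $P_{f'(s)^\perp}$; the term $V(f,s)|f'(s)|^{-2}f''$ then scales by $k^{-1}k^{-2}k=k^{-2}$, and the regularized integral $\Pf\int (f(t)-f(s))\,|f(t)-f(s)|^{-4}\,|f'(t)|\,dt$ scales by $k^{-2}$ as well, so $G_E(kf,s)=k^{-2}G_E(f,s)$, i.e.\ condition (2) holds. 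With all three conditions in place, Lemma~\ref{lemma_cond_MI} applies to $u=G_E$ and yields the \M invariance of $V^{-3}G_E=\mathcal{G}_E^{V^3}$. I expect the only real difficulty to be bookkeeping rather than conceptual: tracking the $f$- and $f'$-components through the sum in the second step, and confirming that the Hadamard finite part $\Pf\int\cdots$ genuinely inherits degree $-2$ homogeneity once the cutoff $d_f(t,s)\ge\varepsilon$ is rescaled together with $f$.
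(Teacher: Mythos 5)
Your proposal is correct and follows essentially the same route as the paper, which deduces the corollary directly from the decomposition \eqref{G_E_u1u4}, Lemma \ref{lemma_cond_MI}, and Lemma \ref{lemma_Ju14}; your explicit verification of the cancellation $\sum_i J(u_i,f,s)=0$ and of the translation and homothety conditions simply fills in details the paper leaves implicit.
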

%

%%%%%%%%%%%%%%%%%%%%%%%%%%%%%%%%%%%%%%%%%%%%%%%%%%%
%%%%%%%%%%% comment out starts here %%%%%%%%%%%%%%%
%%%%%%%%%%%%%%%%%%%%%%%%%%%%%%%%%%%%%%%%%%%%%%%%%%%
\if0 
A simple calculation shows it is equal to 
\[
2\,\mbox{\rm p.v.}\int_{S^1}P_{f'(s)^\perp}\left(
2\,\dfrac{(f(t)-f(s))}
{{|f(t)-f(s)|}^2}
-\dfrac{f''(s)}{{|f'(s)|}^2} \right)
\dfrac{|f'(t)|}
{{|f(t)-f(s)|}^2}\,dt.
\]
\fi 
%%%%%%%%%%%%%%%%%%%%%%%%%%%%%%%%%%%%%%%%%%%%%%%%%%%
%%%%%%%%%%% comment out ends here %%%%%%%%%%%%%%%%%
%%%%%%%%%%%%%%%%%%%%%%%%%%%%%%%%%%%%%%%%%%%%%%%%%%%

%%%%%%%%%%%%%%%%%%%%%%%%%%%%%%%%%%%%%%%%%%%%%%%%%%%%%%%%%%%%%%%%%%%%%%
\section{Remarks and open problems}
%%%%%%%%%%%%%%%%%%%%%%%%%%%%%%%%%%%%%%%%%%%%%%%%%%%%%%%%%%%%%%%%%%%%%%
We like to close the article with remarks and open problems. 

\smallskip
(1) The argument for the case when $\Phi=V^3$ can be generalized to the space of embedded odd dimensional closed submanifolds in $\RR^N$ since the potential and the energy have natural generalization. The difficulty in even dimensional case is explained in Subsection 3.3 of \cite{OS2}. Unfortunately, for the moment the author does not know whether an analogue of Proposition \ref{Doyle-Schramm}, i.e. the positivity of the potential for non-spherical submanifolds holds or not. 

\smallskip
(2) If we forget about the positivity condition for $\Phi$, we can use a ``local'' function as $\Phi$ as follows. 
There is a notion of {\em conformal arc-length} $\rho$ given by 
\[
d\rho=\sqrt[4]{{\kappa'}^2+\kappa^2\tau^2}\,ds,
\]
where $s$ is the usual arc-length parameter of the knot $f(S^1)$, $\kappa$ and $\tau$ are the curvature and torsion, and $\kappa'$ is the derivative with respect to the arc-length, $\kappa'=d\kappa/ds$ (\cite{Fi}). 
Since the conformal arc-length $\rho$ is invariant under M\"obius transformations, 
if we denote the arc-length parameter of $T(f(S^1))$ by $\tilde s$, then \eqref{formula_T_tangent_vector} implies 
\[
\frac{d\tilde s}{d\rho}={\left|T'(f(t))\right|}^2\,\frac{ds}{d\rho}.
\]
Therefore, if we put 
\[
\Phi_c(f,t)=\left(\frac{d\rho}{ds}\right)^3
=\left({\Big(\,\frac{d\kappa}{ds}\,\Big)}^2+\kappa^2\tau^2\right)^{3/4} 
\hspace{0.6cm}(s\>\mbox{ is the arc-length}),
\]
then $\Phi_c$ satisfies the condition \eqref{condition_varphi}. 
But $\Phi_c$ is not necessarily positive; it vanishes on a circular or linear arc of a knot. 

\smallskip
(3) Similarly, among the \Mi functionals for knots, the absolute value of writhe (\cite{Ban-Wh}) and the renormalized measure of circles linking a knot (\cite{OS}) do not fit our scheme since the writhe vanishes for planar curves and the latter may be negative.


\begin{thebibliography}{}
%
%
\bibitem{Ban-Wh} T. Banchoff and J.~H. White, 
{\em The behavior of the total twist and self-linking number of a closed 
space curve under inversions,} Math. Scand. {36}, 254\,--\,262 (1975). 

\bibitem{B11}S.~Blatt, {\em The gradient flow of the M\"obius energy near local minimizers}, Calc. Var. Partial Differential Equations 43, 403\,--\,439 (2012), DOI 10.1007/s00526-011-0416-9. 

\bibitem{BRS}S.~Blatt, P.~Reiter and A.~Schikorra , {\em Harmonic analysis meets critical knots. Critical points of the M\"obius energy are smooth}, Trans. Amer. Math. Soc. {368}, 6391\,--\,6438 (2016). 

\bibitem{BV}S.~Blatt and N.~Vorderobermeier, {\em On the analyticity of critical points of the M\"obius energy}, preprint, arXiv:1805.05739. 

\bibitem{B} J.-L. Brylinski, {\em The beta function of a knot.} Internat. J. Math. {10}, 415\,--\,423 (1999). 

\bibitem{Fi} A. Fialkow, {\em The conformal theory of curves}, Trans. Amer. Math. Soc. 51, 435\,--\,501 (1942). 

\bibitem{FHW}M. H.~Freedman, Z-X.~He and Z.~Wang, {\em M\"obius energy of knots and unknots.} Ann. of Math. \textbf{139}, 1\,--\,50 (1994).

\bibitem{Gv}A.~Gilsbach and H.~von der Mosel, {\em Symmetric critical knots for O'Hara's energies}, Topology Appl. 242, 73\,--\,102 (2018). 

\bibitem{He} Z.-X. He, {\em The Euler-Lagrange equation and heat flow for the M\"obius energy,} Comm. Pure Appl. Math. 53, 399\,--\,431 (2000). 

\bibitem{IN14}A.~Ishizeki and T.~Nagasawa, {\em A decomposition theorem of the M\"obius energy I: Decomposition and M\"obius invariance}, Kodai Math. J. 37, 737\,--\,754 (2014).

\bibitem{IN15}A.~Ishizeki and T.~Nagasawa, {\em A decomposition theorem of the M\"obius energy II: variational formulae and estimates}. Math. Ann. 363, 617\,--\,635 (2015).

\bibitem{KS}R.~Kusner, and J.~M.~Sullivan, {\em M\"obius-invariant Knot Energies,} Ideal Knots. A. Stasiak, V. Katrich, L.~H. Kauffman eds., World Scientific, Singapore, 315\,--\,352 (1998).

\bibitem{LO}R.~Langevin and J.~O'Hara. {\em Conformally invariant energies of knots,} J. Institut Math. Jussieu 4, 219\,--\,280 (2005). 

\bibitem{Nak} N. Nakauchi, {\em A remark on O'Hara's energy of knots,} Proc. Amer. Math. Soc. {118}, 293\,--\,296 (1993).

\bibitem{O1}J.~O'Hara, {\em Energy of a knot.} Topology  {30}, 241\,--\,247  (1991). 

\bibitem{OS} J. O'Hara and G. Solanes, {\em M\"obius invariant energies and average linking with circles}, Tohoku Math. J. {67}, 51\,--\,82 (2015).

\bibitem{OS2}J.~O'Hara and G.~Solanes, {\em Regularized Riesz energies of submanifolds},  Math. Nachr. 291, 1356\,--\,1373 (2018). 

\bibitem{R}P.~Reiter, {\em Repulsive knot energies and pseudodifferential calculus for O'Hara's knot energy family $E(\alpha)$ $\alpha\in[2,3)$}, Math. Nachr. 285, 889\,--\,913 (2012). 

\end{thebibliography}
\end{document}